\newcommand{\citep}{\cite}
\newcommand{\mb}{\mathbb}
\newcommand{\mc}{\mathcal}
\newcommand{\C}{\mathbb{C}}
\newcommand{\Z}{\mathbb{Z}}
\newcommand{\CC}{\mathbb{C}}
\newcommand{\NN}{\mathbb{N}}
\newcommand{\DD}{\mathbb{D}}
\newcommand{\ZZ}{\mathbb{Z}}
\newcommand{\HH}{\mathbb{H}}
\newcommand{\HHH}{\mathcal{H}}
\newcommand{\RR}{\mathbb{R}}
\newcommand{\fr}{\frac}
\newcommand{\eps}{\varepsilon}
\newcommand{\m}{\wedge}
\newcommand{\til}{\tilde}
\newtheorem{theorem}{Theorem}[section]
\newtheorem{defn}{Definition}[section]
\newtheorem{proposition}{Proposition}[section]
\newlength\tindent
\title[Collection of results on exit time of Brownian motion]{A collection of results relating the geometry of plane domains and the exit time of planar Brownian motion, II} 
\author{Greg Markowsky}
\author{Clayton McDonald}
\thanks{Email: greg.markowsky@monash.edu, clayton.mcdonald@monash.edu (corresponding author)}
\begin{document}
 \begin{abstract}
This paper is the sequel to another with the same name (Buttigieg et al., Comput. Methods Funct. Theory, 2023), and is concerned with results of the same type. We deduce a result on the moments of the exit time of Brownian motion from domains whose boundary curve is replaced by a dashed line, and from domains arising from a periodic tiling of the plane. We also give a construction of a type of domain which is similar to a wedge domain, but the behaviour of whose exit time moments answer several questions that had been speculated upon.
 \end{abstract}
\maketitle

\def\changemargin#1#2{\list{}{\rightmargin#2\leftmargin#1}\item[]}
\let\endchangemargin=\endlist

\section{Introduction} \label{intro}

Let $D$ be a domain in $\RR^2$, and let $T=T(D)$ be the first time that a Brownian motion
leaves $D$; that is, $T(D) = \inf\{t\geq 0: B_t \notin D\}$. The distribution of
this stopping time carries a great deal of information about the size and shape
of the domain, as well as the location of the initial point $a$. This idea
seems to have been developed in the seminal papers \cite{burk, davis}, where a
discussion of the conformal invariance of Brownian motion led naturally to the
suggestion that the distribution of $T(D)$ has fundamental connections with the
theory of conformal maps. This connection has been explored by many authors in the last few decades. A recent paper with the same title as this one, \cite{boudbuttme}, contained a collection of loosely related results on this connection. This paper is also of the same type, and our main focus in this one is on domains which are not simply connected. There does not seem to be a reference which surveys exit times and their connections to complex analysis, so we begin with a review of the literature already present. 

Conformal maps are difficult to work with when the
domains in question are not simply connected, so it is natural that most of the
research on this topic (such as \cite{banuelos1993conditioned,
banuelos2024conjecture, banuelos2001first, banuelosdrum, banuelos2005sharp,
dimitri, mecomb, usexitmax, mejmaa, carroll2001old, carroll2006brownian, mac,
meecp, coffee, karafyllia2021property}) has dealt primarily with
domains that were simply connected. 

The expectation of $T(D)$ is easier to work with than its distribution
function, and is obtainable in several different way. Determining $E[T(D)]$ is equivalent to
solving the torsion problem in PDE theory (see \citep{banuelos2002torsional}), and
it can also be determined from the coefficients of a conformal map from the
unit disk onto $U$ (see \citep{banuelosdrum}). Further research in this area
(such as \citep{banuelos1993conditioned,banuelos2024conjecture,carroll2001old,
coffee, meecp}) has generally exploited the equivalence of these various
quantities.

By contrast, the moments $E[T(D)^p]$ are not generally computable for $p \neq
1$, however Burkholder(\citep{burk}) established methods for estimating them,
and also established an important relationship between them and the Hardy norm
of analytic functions. These observations have led also to a number of works by
other authors, such as
\citep{banuelos2005sharp,banuelos2020bounds,carroll2006brownian,kim2021quantitative,mecomb,mejmaa,usexitmax}.

The tail of the distribution of $T(D)$, i.e. the function $t \to P(T(D)>t)$, is also
open to analysis, as it is the solution to the heat equation with certain
boundary values on the domain. However, only in very rare cases does this PDE
have a known, explicit solution. As a result, research on this area has mainly focused on asymptotics; see \cite{deblassie1987exit, banuelos2001first, betsakos2022duration, dimitri, de1988remark, banuelos1997brownian}.

In this paper, we are interested in developing results on certain domains that
are not simply connected, and for which analytic methods are often difficult to
apply. There have been a few papers that have touched upon the non-simply
connected case. In the papers \citep{banuelos2002torsional, banuelos2020bounds,
betsakos2022duration, kim2021quantitative} results in
$\RR^2$ are generally special cases of more general results in $\RR^n$, and often
employ general analytic techniques to obtain bounds. There have been relatively few papers specific to the two dimensional case focusing on domains which are not simply connected and which make use of complex analytic techniques, although an exception to this may be developing in the recent developments on the Schottky-Klein prime function (see \cite{crowdy2020solving,crowdy2016schottky}). In relation to Brownian motion, this tool has thus far been mainly used to calculate the distribution of harmonic measure (see \cite{mahenthiram2024harmonic, mahenthiram2025computing, green2022harmonic,snipes2016harmonic, barton2014new, snipes2005realizing, snipes2008convergence, green2024towards}), but harmonic measure is a close relation to exit time, so it may be anticipated that the prime function will develop into a valuable tool for studying exit times in the future.

The prime function has thus far been applied only to domains with finite, and in many cases relatively small, multiplicity, since these are domains for which it provides an explicit formula. This brings us naturally to the main subject of this paper. We are only interested in the two-dimensional case, and are primarily interested in domains of a certain type with infinite multiplicity, which means that the prime function will not be of use to us. In contrast to most of the papers above, which have been heavily analytic, the methods in this
paper are probabilistic in nature, as the main tool in our proofs is a stopping time
argument. Since our results can be translated into purely analytic ones (about Hardy numbers), it would be interesting to see how analytic arguments might be used to prove them.

One of our main results is concerned with the following question. Suppose we have a domain where the moments of Brownian exit time are well understood, and whose boundary is a curve. Suppose now that we put gaps in the boundary curve - essentially replacing the boundary curve with a dashed line - which allow Brownian motion to escape to the outside of the domain. How does this change the moments of the exit time? A natural, concrete example is the following. If we denote the upper half-plane by $\HH$, then it is well known that $E[T(\HH)^p] < \infty$ precisely when $p < 1/2$. How does this change when the boundary, the real line, is replaced by a dashed line? We will be able to solve this question, and give a somewhat general result, in the next section.

Another type of domain we will consider is one that is periodic, in that it can be realized as a
tiling of the plane by identical boundary components. We will see again that
stopping time arguments allow us to obtain strong bounds on the moments of exit
times. This is discussed in Section \ref{periodic}.

Finally, in Section \ref{achievecrit}, we give an example which answers two questions which have been posed to the first author by a number of colleagues, which we now describe. To describe the first question, we will define the {\it Brownian-Hardy number} of the domain $D$ (the reason for this name will become clear later) by

\begin{align*}
    bh(D) := \sup \{p>0: E[T(D)^p] <\infty\} = \inf \{p>0: E[T(D)^p] =\infty\}.
\end{align*}

Most domains $D$ for which $bh(D)$ is easily calculated and for which $0<bh(D)<\infty$ satisfy $E[T(D)^{bh(D)}]=\infty$, and the natural question is whether this must always be the case. We give an example that shows it need not be, that it is indeed possible that $E[T(D)^{bh(D)}]<\infty$.

The second question is concerned only with the first moment, $E_a[T(D)]$. If we consider this to be a function of the initial point $a$, in other words $h(a) = E_a[T(D)]$, then it is known by standard methods (such as Dynkin's formula) that, ignoring technical issues, $h$ should be a positive solution to

\begin{enumerate}
    \item[$(i)$] $\Delta h = -2$

    \item[$(ii)$] $h(a) = 0$ for $a \in \delta D$
\end{enumerate}

These conditions are enough to uniquely determine $h$ in many cases, for instance when $D$ is bounded and simply connected, but it is not enough when $D$ is unbounded. To give an easy example which illustrates this we may let $D = \{-\frac{\pi}{2} < Re(z) < \frac{\pi}{2}\}$, and then note that
$h(x,y) = \frac{\pi^2}{4} - x^2 + Ce^x \cos(y)$ satisfies $(i)$ and $(ii)$ for any constant $C \geq 0$ (note that $e^x \cos(y)$ is harmonic and equal to $0$ on $\delta D$). Of course, the correct choice here must be $C=0$, since it is clear that $E_{x+yi}[T(D)]$ cannot depend on $y$, but it should be equally clear that a more general method of choosing the correct solution is required.

Happily, Burkholder provided a completely satisfying answer, in \cite{burk}, and an analytic proof of the corresponding analytic statement was given in \cite{markowsky2013method}, where the method was used to calculate the expected exit times of a number of domains. Burkholder's insight is that one must look for a function with at most quadratic growth. In particular, he proved the following.

\begin{theorem} \label{burk_thm}
    Suppose $h$ is a positive function on a domain $D$ satisfying $(i)$ and $(ii)$ above. Suppose also that

    \begin{enumerate}
    \item[$(iii)$] There is a constant $C>0$ such that $h(z) \leq C(1+|z|^2)$ for all $z \in D$.
\end{enumerate}

Then $h(a) = E_a[T(D)]$.
\end{theorem}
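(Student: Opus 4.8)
The plan is to represent $h(B_t)$ as a semimartingale via It\^o's formula and recover the expected exit time by optional stopping, with conditions $(ii)$ and $(iii)$ entering only at the final limiting step. Since $\Delta h = -2$ on $D$, It\^o's formula applied to the stopped path gives
\[
h(B_{t\wedge T}) = h(a) + \int_0^{t\wedge T} \nabla h(B_s)\cdot dB_s - (t\wedge T),
\]
so that $M_t := h(B_{t\wedge T}) + (t\wedge T)$ is a local martingale. Because $h$ is positive, $M_t \geq 0$, hence $M$ is a nonnegative supermartingale and $E_a[t\wedge T] \leq E_a[M_t] \leq h(a)$; letting $t \to \infty$ and using monotone convergence yields $E_a[T] \leq h(a) < \infty$. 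In particular $T < \infty$ almost surely, so the Brownian path actually reaches $\partial D$ — a point that will matter below.

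Next I would run the argument on an exhaustion. Choose \emph{bounded} subdomains $G_n$ with $\overline{G_n}\subset D$ and $G_n \uparrow D$, and let $\tau_n$ be the exit time of $B$ from $G_n$. On $\overline{G_n}$ the function $h$ is $C^2$ with bounded gradient and $\tau_n$ has finite expectation, so the stochastic integral is a genuine martingale up to $\tau_n$ and Dynkin's formula yields the exact identity
\[
h(a) = E_a[\tau_n] + E_a\big[h(B_{\tau_n})\big].
\]
Since $\tau_n \uparrow T$, monotone convergence gives $E_a[\tau_n] \to E_a[T]$, and the theorem reduces to the single claim that $E_a[h(B_{\tau_n})] \to 0$.

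This last step is where the hypotheses do their real work, and it is the main obstacle: the displayed identity holds for \emph{any} solution of $(i)$--$(ii)$ (for instance the function $\frac{\pi^2}{4} - x^2 + Ce^x\cos y$ on the strip), so without growth control the boundary term can fail to vanish. Almost surely $B_{\tau_n} \to B_T \in \partial D$, and by $(ii)$, interpreted as continuity of $h$ up to the boundary with boundary value $0$, we get $h(B_{\tau_n}) \to 0$ pointwise; the difficulty is to promote this to convergence of expectations. Here condition $(iii)$ enters: $0 \le h(B_{\tau_n}) \le C(1 + |B_{\tau_n}|^2) \le C\big(1 + \sup_{t\le T}|B_t|^2\big)$, so it suffices to exhibit an integrable dominating variable, i.e. to show $E_a\big[\sup_{t\le T}|B_t|^2\big] < \infty$. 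I would deduce this from $E_a[T] < \infty$: since $|B_{t\wedge T}|^2 - 2(t\wedge T)$ is a martingale with quadratic variation $4\int_0^{t\wedge T}|B_s|^2\,ds$, the Burkholder--Davis--Gundy inequality bounds $E_a\big[\sup_{t\le T}|B_t|^2\big]$ by a constant times $E_a\big[T^{1/2}\sup_{t\le T}|B_t|\big]$, and Cauchy--Schwarz then closes a quadratic inequality in $u := \big(E_a[\sup_{t\le T}|B_t|^2]\big)^{1/2}$ of the form $u^2 \le A + Bu$ with $A,B<\infty$, forcing $u<\infty$ (after a standard localization to make the estimate legitimate a priori). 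With this dominating variable in hand, dominated convergence gives $E_a[h(B_{\tau_n})] \to 0$, and combining with the exhaustion identity proves $h(a) = E_a[T]$.
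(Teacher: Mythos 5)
The paper does not actually prove this statement: it is quoted as Burkholder's theorem from \cite{burk} (with an analytic proof of the corresponding analytic statement attributed to \cite{markowsky2013method}), so there is no internal proof to compare yours against, and your argument must be judged on its own. It holds up: it is a complete, essentially standard probabilistic proof, and the logical architecture is sound. Step (a): positivity plus $\Delta h = -2$ gives, via the stopped nonnegative supermartingale $h(B_{t\wedge T}) + (t\wedge T)$, the bound $E_a[T] \le h(a) < \infty$, hence $T < \infty$ a.s. Step (b): Dynkin's formula on a bounded exhaustion $G_n \uparrow D$ gives the exact identity $h(a) = E_a[\tau_n] + E_a[h(B_{\tau_n})]$, reducing everything to showing the boundary term vanishes. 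Step (c): condition $(iii)$ dominates $h(B_{\tau_n})$ by $C\bigl(1+\sup_{t\le T}|B_t|^2\bigr)$, whose integrability you correctly deduce from $E_a[T]<\infty$, so dominated convergence finishes the proof. You also correctly flag that $(ii)$ must be read as continuous vanishing of $h$ at the boundary (otherwise $h(B_{\tau_n})\to 0$ a.s.\ is unavailable); that is the intended reading.

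Two minor points, neither a genuine gap. First, the opening application of It\^o's formula up to $t\wedge T$ is informal: $\nabla h$ need not be square-integrable along the path up to $T$ (it may blow up at $\partial D$), so the stochastic integral should be defined by stopping at the exhaustion times $\tau_n$; since your step (b) supplies exactly this localization, and the supermartingale property follows by conditional Fatou along $\tau_n \uparrow T$ (the process being frozen after $T$), this is cosmetic --- indeed $E_a[t\wedge\tau_n] \le E_a\bigl[h(B_{t\wedge\tau_n})+(t\wedge\tau_n)\bigr] = h(a)$ plus monotone convergence already gives $E_a[T]\le h(a)$ with no mention of local martingales. Second, the BDG-plus-quadratic-inequality route to $E_a\bigl[\sup_{t\le T}|B_t|^2\bigr]<\infty$ is correct but heavier than necessary: Doob's $L^2$ maximal inequality applied to each coordinate martingale $B^i_{t\wedge T}$, together with $E_a\bigl[(B^i_{t\wedge T}-a^i)^2\bigr]=E_a[t\wedge T]\le E_a[T]$, gives the same conclusion in two lines.
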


In other words, if we find a function $h(a)$ which satisfies the required pde, and has quadratic growth, then it must be equal to $E_a[T(D)]$. However, this leads to a natural question: does the function $a \to E_a[T(D)]$ always have quadratic growth? We will show that it does not, as the domain we construct to answer the first question above will satisfy

$$
\sup_{a \in D}\frac{E_a[T(D)]}{1+|a|^2} = \infty.
$$

\section{Preliminaries}
The $p$-th Hardy space $H^p$ is the class of holomorphic functions defined on the unit disk $\DD$ which satisfy  
\begin{align*}
    \Vert f\Vert_p^p := \frac{1}{2\pi}\sup_{r\leq 1} \int_0^{2\pi} |f(re^{it})|^p dt < \infty
\end{align*}
The norm $\Vert . \Vert_p$ is the $p$-th Hardy norm. Fixing $f$, the function $p\mapsto\Vert f\Vert_p$ is 
increasing, and consequently all the Hardy spaces are nested. As previously mentioned, the study of members in
these spaces has a relationship with the expected exit time of a domain. We
will briefly outline this relationship.

The \textit{Hardy number of a holomorphic function $f$} on the disk, denoted $h(f)$,
is defined by the following:

\begin{align*}
    h(f) := \sup_{p>0} \{p: \Vert f\Vert_p <\infty\} = \inf_{p>0} \{p: \Vert f\Vert_p =\infty\}
\end{align*}

Note that $h(f)$ can be characterized, when finite, as the number such that $f\in H^{h(f)-\eps}$ and $f\notin H^{h(f)+\eps}$ for 
all $\eps>0$. The question of what happens at the critical value $h(f)$ is an interesting one, and we will address this later in the paper. 

The {\it Hardy number of a domain} $D$ is defined via
    \begin{align*}
        h(D) := \inf_{f\in H(\DD, D)} h(f)
    \end{align*}
    where $H(\DD, D)$ are all holomorphic maps from $\DD$ into $D$. Note that this infimum is always attained by the universal covering map of $D$, and the Hardy number can be alternatively defined in that way. Note that this includes the case when $D$ is simply connected and $f$ is conformal, hence the connection with conformal maps.\\

Recalling that our primary interest in this paper is in moments of exit times, we will now discuss the connected probabilistic theory. We will use the notation $B_t = R_t + i I_t$ to denote planar Brownian motion, so $R_t, I_t$ are independent, one-dimensional Brownian motions. We further will use $a$ always to denote the initial point of $B$, i.e. $B_0 = a$ almost surely. Among other remarkable facts, it was shown in \cite{burk} that the finiteness of
$E_{a}[\tau(R)^{p}]$ and $E_{a}[|B_{\tau(R)}|^{2p}]$ does not
depend on $a$. We are therefore free to make statements such as "$E[\tau(R)^{p}]< \infty$" and "$E[\tau(R)^{p}]=\infty$" without specifying $a$. It was also shown by Burkholder that 
\begin{align*}
    E[T(D)^{p}] < \infty \Longleftrightarrow \Vert f\Vert_{2p} < \infty,
\end{align*}

where again $f$ is the universal cover from $\DD$ onto $D$ (the domains which don't have a universal cover from the disk, namely $\CC$ and the once-punctured plane, trivially have $E[T(D)^{p}] = \infty$ for all $p$, since in fact $T(D) = \infty$ almost surely in both cases).


We see that the Hardy number of a domain can actually be defined in terms of the moments of exit time of Brownian motion, and this has several advantages. To begin with, the exit time is a highly intuitive quantity, and allows one to sidestep the notion of a universal cover, which is an abstract and advanced topic. Furthermore, it allows one to extend to definition so that it applies more generally, not only to domains. To make this precise, let us recall the details of what is commonly referred to as the "conformal invariance of Brownian motion", although it might be more accurately be called "analytic invariance" or "holomorphic invariance", as injectivity does not play a role. Given a nonconstant analytic function $f$ on a domain $D$, let

\begin{align} \label{skyfall}
    \sigma_t :=  \int_0^{T(D) \wedge t} |f'(B_s)|^2 ds. 
\end{align}

It is not hard to see that $\sigma_t$ is strictly increasing and continuous a.s., and the same is therefore true of $C_t := \sigma_t^{-1}$. Levy's Theorem (see \cite{bass, davis, durBM}) now states that the process $\hat B_t = f(B_{C_t})$ is a planar Brownian motion, however some attention must be paid to the time interval upon which it is defined. $\sigma$ maps the random interval $[0,T(D)]$ onto another random interval $[0,\sigma_{T(D)}]$, and it is upon precisely this random interval that $X_t$ is defined. Furthermore, $\sigma_{T(D)}$ is a new stopping time, but it is not necessarily the exit time of a domain. To give an example of this, suppose $D$ is the infinite strip $\{-2\pi < Im(z) < 2\pi\}$ and $f(z) = e^z$. The stopping time $\sigma_{T(D)}$ can now be characterized as the first time that the argument of $\hat B_t$ is $\pm 2\pi$, or equivalently the first time that $\hat B_t$ hits the positive real axis, having wound once in either direction about the origin. It is not the exit time of a domain.

Stopping times of the form $\sigma_{T(\DD)}$ are also covered by Burkholder's results. To state this precisely, we will let 

$$
\HHH = \{\tau: \tau = \sigma_{T(\DD)} \text{, with $\sigma$ defined by (\ref{skyfall}) for some $f$ analytic on $\DD$}\}
$$


Burkholder's results now establish that $E[\sigma_{T(D)}^{p}] < \infty \Longleftrightarrow \Vert f\Vert_{2p} < \infty$, where $\sigma$ is the time change of the Brownian motion associated to $f$.
This justifies the name of the Brownian-Hardy number given earlier, which for any stopping time $\tau \in \HHH$ is defined to be 

\begin{align*}
        bh(\tau) := \sup_{p>0}\{p: E_z [\tau^p] < \infty\} = \inf_{p>0}\{p: E_z [\tau^p] = \infty\}.
\end{align*}

\if2 
The $p$-Hardy norm can be seen as the expectation of the $p^{th}$
moment of $f(B_{T(\mathbb{D})})$ as $B_{T(\mathbb{D})}$ is uniformly
distributed on the circle. Thus, the equivalence of the finiteness
of the $p^{th}$ moment of the stopped Brownian motion and the $2p^{th}$
Hardy norm of a conformal map can be seen as a consequence of the
conformal invariance of Brownian motion. 
\fi

In many cases we will want $\tau$ to be $T(D)$, the exit time of a domain $D$. In that case we will use the shorthand $bh(D) := bh(T(D))$. Note that Burkholder's result states that $2bh(D) = h(D)$ for any domain $D$, so that this new definition agrees with the classical Hardy number (up to a multiplicative constant). It is clear that the Brownian-Hardy number is monotonic, in the sense that if $\tau_1 \leq \tau_2$ a.s. then $bh(\tau_1) \geq bh(\tau_2)$, and this monotonicity caries over to domains: if $U \subseteq D$ then $bh(U) \geq bh(D)$. This implies that if $D$ is bounded then $bh(D) = \infty$, since $D$ is contained in some disk, which is then the image of the unit disk under a linear map. The upper-half plane has $bh(\HH) = 1/2$, which follows as 
$(1-z)/(1+z)$ is in $H^{1-\epsilon}$. If $D=\{|z|>1\}$, the complement of the closed unit disk, then $bh(D) = 0$; this can be seen by verifying that the covering map $e^{(1-z)/(1+z)}$ is not in any Hardy space. 

These examples are all exit times of domains, but it is worth understanding a stopping time that is not of that form. Let $\alpha>0$ be given, and suppose that $B_0 = 1$ a.s. Let $\tau_\alpha = \inf\{t \geq 0: arg(B_t)= \pm \alpha\}$; note that $arg(B_t)$ is a well defined, continuous process with $arg(B_t) = 0$ a.s. When $\alpha \leq \pi$, $\tau_\alpha$ is the exit time of a wedge, but when $\alpha > \pi$ it is no longer of that form. The exit time of a wedge was treated by Burkholder in \cite{burk}, and he showed there that $bh(\tau_\alpha) = \frac{\pi}{4\alpha}$; in fact, he stated this only for $\alpha \leq \pi$, the range that corresponds to the exit time of a domain, but his result holds equally well for all $\alpha$. This follows by noting that $\tau_\alpha$ is the image of the exit time of the disk under the analytic function $f(z) = \Big(\frac{1-z}{1+z}\Big)^{2\alpha/\pi}$, and this function is in $H^{2p}$ precisely when $4\alpha p/\pi < 1$.

Essentially we are defining a classical quantity from analysis, in this case the Hardy number of a domain, in terms of the exit time from that domain of Brownian motion. This general idea has been used before, in particular in \cite{markowsky2017planar} for the Green's function, and in \cite{Markowsky_2018} for harmonic measure. Defining these quantities in this way extends the classical definitions so that they cover more cases, and also is of value in proving results on the classical objects themselves; for example, it was used to prove a version of the Riemann mapping theorem in \cite{markowsky2017planar}, and the germ of the idea was also used in \cite{markowsky2018remark} to prove the regularity of the Dirichlet problem for simply connected domains in the plane. It will be interesting to see whether other examples of this method can be developed in the future.

\if2 To see this, we show for all $p>0$ the
function $|z|^p$ has no harmonic majorant in $D$. From a theorem in \cite{ransford} this is equivalent to showing
\begin{align*}
    \int_D g_D(z, w) |w|^{p-2} dw = \infty
\end{align*}
For all $z\in D$, where $g_D$ is the Green's function of $D$. The above is simple to show. \fi 

\section{Domains with boundaries replaced by dashed curves}

For simplicity in the following proofs, we will denote $\Vert T \Vert_p = E_a[|T|^p]^{1/p}$. We are only concerned with finiteness
of these quantities, so the value of $a$ is irrelevant, and we suppress it in the notation.

\begin{theorem} \label{halfplane}
    Let $D$ be the half plane, but with the real line replaced with dashed line segments
    of length $r$, with distance $x$ between the centers of these segments (here $x > r$. That is,
    \begin{align*} D := \mb{C} \setminus \left(\mb{R}\cap
        \bigcup_{n\in\mb{Z}} \{|z-nx|\leq r\}\right).\end{align*} Then we have
        $bh(T(D)) = bh(T(\HH)) = 1/2$, independent of $x$ and $r$.
\end{theorem}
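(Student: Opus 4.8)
The plan is to prove the two inequalities $bh(T(D)) \leq 1/2$ and $bh(T(D)) \geq 1/2$ separately. Writing $K = \RR \cap \bigcup_{n} \{|z-nx| \leq r\}$ so that $D = \CC \setminus K$, the exit time $T(D)$ is exactly the hitting time of $K$. The upper bound is then immediate: since $K \subseteq \RR$, a Brownian motion started in $\HH$ must hit $\RR$ no later than it hits $K$, so $T(\HH) \leq T(D)$ pathwise, and the monotonicity of $bh$ recorded in the preliminaries gives $bh(T(D)) \leq bh(T(\HH)) = 1/2$ (equivalently, $\HH \subseteq D$). It remains to show $E[T(D)^p] < \infty$ for every $p < 1/2$, which yields the reverse inequality.

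For the lower bound I would run a renewal/excursion argument. Since finiteness of the moments is independent of the starting point, I take $B_0 \in \RR \setminus K$ and define stopping times $0 = \tau_0 \leq \tau_1 \leq \cdots$ inductively: given $B_{\tau_k} \in \RR \setminus K$, run the motion until the first time it either hits $K$ (a \emph{success}) or its imaginary part $I_t$, having reached a fixed height $H$ (say $H = x$), returns to $0$ (a \emph{failure}); call this time $\tau_{k+1}$, and note that on a failure $B_{\tau_{k+1}}$ again lies in $\RR \setminus K$. Let $N$ be the number of rounds until the first success, so that $T(D) = \tau_N = \sum_{k=0}^{N-1} \xi_k$ with $\xi_k := \tau_{k+1} - \tau_k$; round $k$ is performed precisely on the event $\{N > k\}$, which requires rounds $0,\dots,k-1$ all to be failures and hence lies in $\mathcal{F}_{\tau_k}$.

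The two ingredients I need are a uniform lower bound on the success probability and a uniform tail bound on the round durations. For the first, the success probability from a real starting point $p$ depends, by periodicity, only on $p \bmod x$; it is a continuous and strictly positive function on the compact interval $[0,x]$ (strictly positive because within distance $x$ of any point sits a full segment of positive logarithmic capacity, which the excursion hits with positive probability), so it is bounded below by some $\delta > 0$. The strong Markov property then forces $P(N > k) \leq (1-\delta)^k$. For the second, the duration $\xi_k$ is dominated pathwise by the time $B$ for the one-dimensional motion $I_t$ to climb from $0$ to $H$ and return to $0$; this $B$ is a sum of two level-hitting times, so $bh(B) = 1/2$ and $M := E[B^p] < \infty$ for every $p < 1/2$, with $M$ independent of the horizontal starting position since $I_t$ is.

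Finally I assemble these. Because $0 < p < 1/2 < 1$, subadditivity of $t \mapsto t^p$ gives $T(D)^p = \big(\sum_{k=0}^{N-1}\xi_k\big)^p \leq \sum_{k=0}^{\infty} \xi_k^p\, \mathbf{1}_{\{N > k\}}$, and conditioning on $\mathcal{F}_{\tau_k}$ with the strong Markov property yields $E[\xi_k^p\, \mathbf{1}_{\{N>k\}}] \leq M\, P(N > k) \leq M(1-\delta)^k$. Summing the geometric series gives $E[T(D)^p] \leq M/\delta < \infty$ for all $p < 1/2$, hence $bh(T(D)) \geq 1/2$ and the theorem follows. The main obstacle is the uniform positivity $\delta > 0$ of the success probability across all horizontal positions, in particular near the middle of a gap; this is exactly where the standing hypothesis (that $K$ leaves genuine gaps yet contains a full segment in every period) is used, and the moment bookkeeping afterwards is routine given the subadditivity inequality.
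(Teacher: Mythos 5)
Your proof is correct and is essentially the paper's own argument: a stopping-time renewal decomposition of $T(D)$, a periodicity-plus-harmonicity bound keeping the continuation probability uniformly away from $1$, per-round $p$-th moments finite for all $p<1/2$ (half-plane exit times are exactly level-passage times of the imaginary part), and assembly via subadditivity of $t \mapsto t^p$ together with a geometric series, with the upper bound coming from $\HH \subseteq D$ exactly as in the paper. The only difference is bookkeeping: the paper alternates hitting times between the two lines $\{\Im z = 1\}$ and $\{\Im z = -1\}$, whereas you renew on the real axis with rounds terminated either by hitting a segment or by an up-down excursion of $\Im B_t$ to height $H$; the resulting estimates are the same.
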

\begin{proof}
   We first define $K = \{z:\Im{z} = 1\}$ and $\tilde K = \{z:\Im{z} = -1\}$. We will assume $B_t$ starts at $z\in K$. 
   Define the stopping rules $\tau_0 = 0$, $\tau_{2j+1} =\inf\{t>\tau_{2j}:B_t\in K\}$, and $\tau_{2j} = \{t>\tau_{2j-1}: B_t\in \tilde K\}$. See Figure \ref{half-plane-fig} for the behavior of these stopping times for a fixed Brownian path. Let $\tilde \tau_j = 
   \tau_j\wedge T(D)$. We have $\lim_{j\to\infty} \tilde\tau_j = T(D)$. By
   Minkowski's inequality,
\begin{equation} \label{viking}
        \Vert\tilde \tau_k\Vert_p^p \leq \sum_{j=1}^k \Vert\til\tau_j - 
        \til\tau_{j-1}\Vert_p^p.
\end{equation}   

   The random variable $\tau_j - \tau_{j-1}$ is
   equivalent to the first exit time from either the half-planes of  $\{z:\Im{z} > -1\}$ or 
   $\{z:\Im{z}<1\}$. Therefore $E(\tau_j-\tau_{j-1})^p <\infty$ for all $p < 1/2$. By the law of total probability,
   $$
        \Vert\til\tau_j - 
        \til\tau_{j-1}\Vert_p^p \leq  E_a[(\tau_j-\tau_{j-1})^p|\tau_{j-1}\leq T(D)]P_a(\tau_{j-1}\leq T(D))
   $$
   
   The event
   $\{\tau_{j-1}\leq T(D)\}$ encodes the probability Brownian
   motion passes from $K$ to $\tilde K$ $j-1$ times without hitting the 
   boundary of $D$. For $a \in \{z:\Im{z} \leq 1\}$, say, the probability $P_a(\tau_{1}\leq T(D))$, which is the probability of hitting $\tilde K$ before exiting $D$ when starting at $a$, can be bounded above by $u(a) := P_a(B_{T(\HH)} \in \bigcup_{n\in\mb{Z}} \{|z-nx| \leq r\})$, which is the probability that the Brownian motion exits the upper half plane on one of our dashed line segments. $u(a)$ is well-known to be harmonic (and therefore continuous) on $\HH$, and it is also periodic of period $x$ because the same is true of  $\bigcup_{n\in\mb{Z}} \{|z-nx| \leq r\}$. It is therefore strictly bounded away from 1, so let us say that $P_a(\tau_{1}\leq T(D))<\alpha<1$. Then it follows that $P_a(\tau_{j-1}\leq T(D)) < \alpha^{j-1}$. Furthermore, the invariance of the entire picture under horizontal translations shows that $E_a[(\tau_j-\tau_{j-1})^p|\tau_{j-1}\leq T(D)]$ is equal to a deterministic constant independent of $a$, call it $C_p$. Thus,
   $$
   \Vert\til\tau_j - 
        \til\tau_{j-1}\Vert_p^p \leq C_p \alpha^{j-1}.
   $$
   Passing to the limit in $\eqref{viking}$, we have
   $$
   E[T(D)^p] \leq C_p \sum_{j=1}^\infty \alpha^{j-1} < \infty,
   $$
   if $p<1/2$. That is, $bh(D) \geq 1/2$. On the other hand, $\HH \subseteq D$, and $bh(\HH) = 1/2$, so $bh(D) \leq 1/2$. The result follows.
\end{proof}
\begin{figure}
 \includesvg[width=10cm, height=8cm]{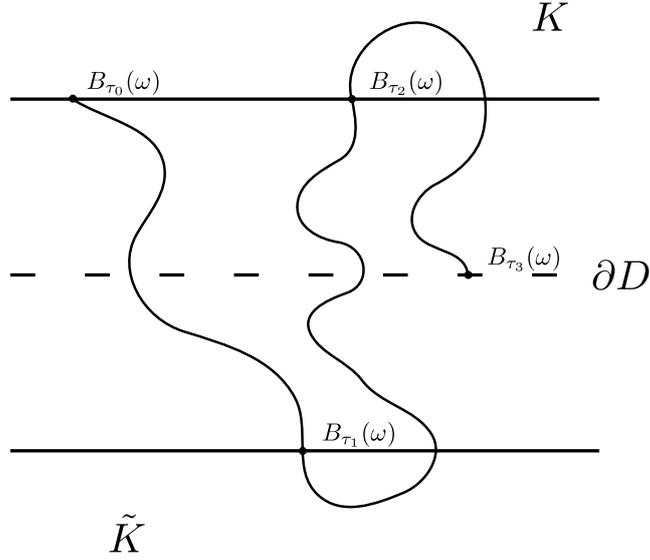}
 \caption{Definition of the Stopping Times $\{\tau_j\}$}\label{half-plane-fig}
\end{figure}
In fact, this result admits a significant generalization if we extract only the parts of the proof that are needed. Before we state the generalization, however, we must make a few more definitions.

\begin{defn}
(Jordan curve)
A Jordan curve is a closed curve in $\CC$ which is the homeomorphic image of a circle.
\end{defn}
\begin{defn}
(Jordan domain)
A domain whose boundary is a Jordan curve is called a Jordan domain.
\end{defn}
Note that Jordan domains are automatically simply connected. The question of
calculating the Hardy number of a Jordan domain is not interesting, because a
Jordan domain is bounded and it follows easily that its Hardy number is
infinite. We therefore adjust the definitions as follows.

\begin{defn}
(Jordan{*} Domain)
A Jordan{*} curve is a curve $\hat C$ in the Riemann sphere $\hat \CC$ which is the image of Jordan curve $C$ under a Mobius transformation which takes a point of $C$ to $\infty$. A Jordan* domain is a (necessarily unbounded) domain in $\CC$ whose boundary is a Jordan* curve.
\end{defn}

Essentially we are modifying the class of Jordan domains to require $\infty$ to
be a boundary point, and the Hardy number of Jordan* domains is no longer
trivial. Alternatively, Jordan* domains could be defined to be domains whose
boundary contains $\infty$ and is homeomorphic (as a set in the Riemann sphere)
to a circle. Note that the Jordan Curve Theorem shows that Jordan* domains naturally come in pairs, since the complement of a Jordan curve is a pair of domains; we will refer to such a pair as {\it complementary Jordan* domains}. 

Jordan* domains were introduced in \cite{mejmaa}, where they were proposed as a natural class of domains upon which to study the moments of Brownian exit times, and were also considered in
\cite{boudbuttme}, where it was shown that it is possible for complementary Jordan* domains to be both simultaneously large or both simultaneously small at infinity when viewed according to Brownian exit time moments. In this paper, our interest is in the following question: suppose that $D,\tilde D$ are complementary Jordan* domains, whose shared boundary is a Jordan* curve $\hat C$; if we form a new domain $U$ by putting gaps in $\hat C$, so that Brownian motion is able to pass through $\hat C$, and taking $U$ to be the union of $D$ and $\tilde D$ together with these gaps, then what can we say about $m(U)$? It is clear that $m(U) \leq \min\{m(D), m(\tilde D)\}$, because $D, \tilde D \subseteq U$, but when can equality occur? An answer to this question is given by the following theorem.

\begin{theorem}
    Suppose $D$ is a Jordan* domain with boundary given by a Jordan* curve $\hat C$, and with complementary Jordan* domain $\tilde D$. Define a new domain $U$ by $U:=D \cup \tilde D \cup V$, where $V$ is an open subset of $C$. Suppose there are closed sets $K \subset D, \tilde K \subset \tilde D$ with the following properties.

    \begin{itemize}
        \item $\sup\{E_a[T(K^c)^p] : a \in \tilde K\} < \infty$.
        \item $\sup\{E_a[T(\tilde K^c)^p] : a \in K\} < \infty$.
        \item $\inf\{P_a(T(U)<T(\tilde K^c)): a \in K\} > 0$ or $\inf\{P_a(T(U)<T(K^c)): a \in \tilde K\} > 0$.
    \end{itemize}

    Then $bh(U) = \min\{bh(D), bh(\tilde D)\}$.
\end{theorem}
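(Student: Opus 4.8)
The plan is to generalize the argument from Theorem \ref{halfplane} by abstracting the stopping-time scheme. The key insight is that in the concrete half-plane case, the sets $K = \{\Im z = 1\}$ and $\tilde K = \{\Im z = -1\}$ played two roles: the excursions between them had finite $p$-th moments (a finite-multiplicity condition), and the Brownian motion had a uniformly positive chance of escaping through the gaps each time it crossed. The three hypotheses in the theorem are precisely the abstractions of these roles. So first I would set up stopping times alternating between $K$ and $\tilde K$. Since $bh(U) \le \min\{bh(D), bh(\tilde D)\}$ holds automatically by monotonicity (as $D, \tilde D \subseteq U$), the entire content is the reverse inequality: I must show $E_a[T(U)^p] < \infty$ whenever $p < \min\{bh(D), bh(\tilde D)\}$.

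The approach is to fix such a $p$ and construct stopping times $\tau_0 = 0$ and then alternately $\tau_{2j+1} = \inf\{t > \tau_{2j} : B_t \in K\}$ and $\tau_{2j} = \inf\{t > \tau_{2j-1} : B_t \in \tilde K\}$, stopped at $T(U)$ via $\tilde\tau_j = \tau_j \wedge T(U)$, so that $\tilde\tau_j \uparrow T(U)$. Applying Minkowski's inequality exactly as in \eqref{viking} reduces the problem to bounding each increment $\Vert \tilde\tau_j - \tilde\tau_{j-1}\Vert_p^p$. Here the first two hypotheses do the work: an increment starting in $K$ and running to $\tilde K$ is bounded by the exit time of the complement $T(\tilde K^c)$, whose $p$-th moment is uniformly bounded over starting points in $K$ by the second hypothesis (and symmetrically for the other parity). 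The crucial point connecting this to $bh(D)$ and $bh(\tilde D)$ is that $T(\tilde K^c)$ is an exit time from a domain containing $\tilde D$-type geometry, so its finiteness for $p < \min\{bh(D), bh(\tilde D)\}$ must be justified; I would want these suprema finite precisely in that range, and the hypotheses are stated to guarantee this.

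The decay of $P_a(\tilde\tau_{j-1} < T(U))$ is where the third hypothesis enters, and this is the main obstacle. The third hypothesis asserts a uniformly positive escape probability \emph{from one side only}, so I cannot immediately conclude geometric decay $P_a(\tau_{j-1} \le T(U)) < \alpha^{j-1}$ with a single $\alpha < 1$ at every step. The resolution is to pair consecutive crossings: a full round trip from $K$ back to $K$ (two increments) carries at least one passage through the side where escape has positive probability, so over each pair of steps the survival probability is damped by a factor $1 - \delta < 1$, where $\delta$ is the infimum in the relevant alternative of the third hypothesis. This yields geometric decay in $j$ after grouping, which is all that is needed. Combining the uniform moment bound $C_p$ from the first two hypotheses with this geometric survival decay gives
\begin{equation*}
E_a[T(U)^p] \le C_p \sum_{j=1}^\infty (1-\delta)^{\lfloor (j-1)/2 \rfloor} < \infty,
\end{equation*}
so $bh(U) \ge p$. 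Letting $p \uparrow \min\{bh(D), bh(\tilde D)\}$ gives $bh(U) \ge \min\{bh(D), bh(\tilde D)\}$, and combined with the trivial reverse inequality the theorem follows.

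The step I expect to require the most care is verifying that the conditional increment moments are genuinely uniform in the starting point and that the Markov property lets me factor survival probabilities cleanly across the alternating scheme; the translation-invariance shortcut available in Theorem \ref{halfplane} is unavailable here, so the uniform suprema in the first two hypotheses must be invoked explicitly at each step rather than replaced by a single deterministic constant $C_p$.
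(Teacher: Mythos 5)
Your proposal is correct and follows essentially the same route as the paper, whose own proof is only a sketch stating that the argument of Theorem \ref{halfplane} carries over with $K, \tilde K$ in place of the horizontal lines and the same Minkowski-plus-geometric-decay bound. In fact your treatment of the one-sided third hypothesis (pairing consecutive crossings to get decay $(1-\delta)^{\lfloor (j-1)/2\rfloor}$, rather than a per-step factor $\alpha$) and your explicit use of the uniform suprema in place of translation invariance fill in precisely the details the paper's sketch glosses over.
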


{\bf Proof:} (sketch) The proof is analogous to that of Theorem \ref{halfplane}, with $\hat C$ playing the role of $\RR$ there, and $K, \tilde K$ playing the roles of the lines $\{\Im{z} = \pm 1\}$. We can start the Brownian motion at a point in $K$, then let the stopping times $\tau_n$ be defined as before, the successive hitting times of $K$ and $\tilde K$. Let $\tilde{\tau}_j := \tau_j \m T(D)$. Split $\tau_n$ as in \eqref{viking}, and bound in the same way as before. This results in a convergent geometric sum, and the result follows. \qed \\

A basic example that satisfies the conditions above is that of the wedge. Let $\alpha \leq \pi/2$ and define $C = \{z: z = re^{\pm i\alpha}, \; r\geq 0\}$. We will denote the two complementary wedges by $W_1, W_2$. Suppose $W_1$ is the smaller of the two, meaning
it has an opening angle $2\alpha$. It follows $W_2$ has an opening angle $\pi - \alpha \geq \alpha$. From Burkholder's paper \cite{burk} we have $bh(W_1) = \frac{\pi}{4\alpha}$, and $bh(W_2) = \frac{\pi}{4(\alpha-\pi)}$. In this case $bh(W_1) \geq bh(W_2)$.
Now define $K = W_1^* + 1 := \{z \in \CC: z - 1 \in W_1^*\}$, and $\tilde K = W_2^* - 1:= \{z \in \CC: z + 1 \in W_2^*\}$, where $W_j^*$ denotes the closure of $W_j$. Since the Brownian Hardy number is translation invariant, we have $bh(W_1) = bh(K)$, $bh(W_2) = bh(\tilde K)$. So long as $p \leq \min\{bh(D), bh(\tilde D)\}$ the first two criteria of Theorem 3.2 are satisfied. Define $V\subset C$ to be the resultant curve after removing infinitely many regularly spaced line segments from $C$. To verify the third criterion, first let $R_1, R_2$ be the upper and lower rays of $V$. For any $\eps>0$, we can choose $a=re^{i\alpha}+1$ sufficiently far from $R_2$ so that 
$P_a(T(U) \leq T(K^c)\;|\;T(R_2) = T(U))\leq \eps$. That is, this contribution to 
$P(T(U)\leq T(K^c))$ can be made arbitrarily small. Taking $r\to\infty$ the quantity $P_a(T(U) \leq T(K^c)\;|\;T(R_1) = T(U))$ tends to a periodic
harmonic function due to the fact $V$ is periodic on each ray. It should
be clear from this, that there is no $r$ value for which $P_a(T(U) \leq T(K^c)\;|\;T(R_1) = T(U))$ vanishes, and hence the third criterion is satisfied. Hence, in this case $bh(T(D)) = \frac{\pi}{4(\alpha-\pi)}$
\\
\begin{figure}[H] \label{}
\begin{center}
 \includesvg[width=10cm, height=8cm]{wedge.svg}
 \caption{}
\end{center}
\end{figure}
{\bf Remark:} As mentioned above, the stopping time technique here is similar to one used in \cite{mecomb}, and purely analytic methods were used in \cite{karafyllia2021hardy, karafyllia2022range} to prove and extend these results. It would be interesting to see whether analytic techniques would apply to prove the results in this section.

\section{Exit time from a lattice} \label{periodic}
We move on to another family of domains where the Brownian Hardy number can be
determined by using an approach similar to the one in the previous section. We
first define a lattice to be the set $\{mz+nz:m,n\in \ZZ\}$ for some pair of
complex numbers $z,w$. So a lattice is a doubly periodic, discrete subset of
$\CC$. Define the domain 
\begin{align*} D =
\CC\setminus\left(\;\bigcup_{z_1\in\Lambda} \{|z-z_1|<r\} \right), 
\end{align*}
where $r$ is chosen such $\{|z-z_1|<r\}\cap \{|z-z_2|<r\} = \emptyset$ for
all $z_1, z_2\in\Lambda$. This ensures that $D$ has one component.
We should expect $bh(T(D)) = \infty$ as Brownian motion is unable
to wander off away from $\partial D$. We will first prove this, and then see how the proof generalizes 
to more general domains of this type.
\begin{theorem}\label{lattice}
    Let $\Lambda$ be a lattice in $\C$, and let $D$ be as defined above. Then $bh(T(D)) = \infty$
\end{theorem}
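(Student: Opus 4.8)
The plan is to mimic the structure of the proof of Theorem \ref{halfplane}, exploiting the double periodicity of the lattice $\Lambda$ to trap the Brownian motion near $\partial D$ and force all moments to be finite. The key geometric fact is that, because the disks removed are arranged in a doubly periodic pattern with bounded gaps, the Brownian motion cannot travel far in any direction without having a uniformly positive chance of hitting one of the disks (and thereby exiting $D$). The central quantity to control is the probability that the motion, started anywhere in $D$, survives (avoids $\partial D$) long enough to exit a fixed-size window. First I would fix a fundamental domain for the lattice and, by periodicity, reduce all estimates to a compact set of starting configurations.

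Concretely, I would set up a sequence of nested "shells" or annular regions around the starting point and define stopping times $\tau_j$ as the successive times the motion crosses from one shell to the next (say the first times $|B_t|$ reaches radii $R, 2R, 3R, \dots$ for a suitably chosen $R$). Writing $\tilde\tau_j = \tau_j \wedge T(D)$, I would again apply Minkowski's inequality as in \eqref{viking} to bound $\Vert \tilde\tau_k\Vert_p^p$ by the sum of the increments $\Vert \tilde\tau_j - \tilde\tau_{j-1}\Vert_p^p$. Each increment is dominated by the exit time of a half-plane (or a bounded-width strip), whose $p$-th moment is finite for $p<1/2$; crucially, here we get \emph{all} $p$, because the key point is the decay of the survival probability, not the per-increment moment. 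The heart of the argument is the estimate $P_a(\tau_{j-1} \le T(D)) \le \alpha^{j-1}$ for some $\alpha<1$: each time the motion crosses a shell of width comparable to the lattice spacing, it passes near enough disks that it has probability at least $1-\alpha$ of being absorbed. By periodicity this $\alpha$ can be taken uniform over all starting points, yielding the geometric bound.

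The resulting estimate is then $E[T(D)^p] \le C_p \sum_{j=1}^\infty \alpha^{j-1} < \infty$, and since this holds for \emph{every} $p$, we conclude $bh(T(D)) = \infty$. The improvement over Theorem \ref{halfplane} comes precisely from the two-dimensional nature of the obstruction: in the half-plane case the motion could escape upward to infinity with positive probability (giving $\alpha$ close to $1$ but nonzero, and only the increment moments limited the range of $p$), whereas here the doubly periodic disks surround the motion on all sides, so the geometric decay dominates any polynomial growth of the increment moments.

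The step I expect to be the main obstacle is establishing the uniform bound $\alpha < 1$ on the one-shell survival probability, i.e. showing there is a uniform positive probability of hitting \emph{some} disk while crossing a shell, uniformly over starting position. This requires a quantitative estimate that the harmonic measure of the removed disks, as seen from any point in a fundamental cell, is bounded below. I would establish this by a compactness argument: the relevant survival probability is a harmonic function on a fundamental domain, continuous and strictly less than $1$ on a compact region (since the motion started at any interior point meets a nearby disk with positive probability), so its supremum over the compact fundamental cell is some $\alpha<1$; periodicity then extends this bound to all of $D$. Care must be taken to ensure the radii $R$ of the shells are chosen large enough relative to the lattice spacing that each shell-crossing genuinely encounters a full period's worth of disks, making the hitting probability bounded away from zero independently of $j$.
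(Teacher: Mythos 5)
Your overall strategy (Minkowski's inequality plus a geometric bound on successive survival probabilities) matches the paper's, but your decomposition is genuinely different: the paper does not use concentric shells. Its stopping times are the successive hitting times of the grid $L$ of lattice edges, each time excluding the most recently visited edge, so that each increment is the exit time of a \emph{fixed bounded region} (the two cells adjacent to an edge), and both required uniform bounds follow immediately from lattice periodicity. Your shell decomposition breaks that periodicity, and this creates two genuine gaps. The first is your treatment of the increment moments: you assert that each increment is dominated by a half-plane exit time (finite moments only for $p<1/2$) and that "the decay of the survival probability, not the per-increment moment" is what gives all $p$. That reasoning is false: in the bound $\Vert\tilde\tau_j-\tilde\tau_{j-1}\Vert_p^p \le E\bigl[(\tau_j-\tau_{j-1})^p \,\big|\, \tau_{j-1}<T(D)\bigr]\,P\bigl(\tau_{j-1}<T(D)\bigr)$, a geometrically small factor cannot compensate for an infinite conditional moment, so half-plane domination is useless for $p\ge 1/2$. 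What actually saves your scheme is that $\tau_j-\tau_{j-1}$ is the hitting time of the circle $|z|=(j+1)R$, hence dominated by the exit time of the \emph{bounded} disk $\{|z|<(j+1)R\}$, all of whose moments are finite and of order $\bigl((j+1)R\bigr)^{2p}$; your closing remark about "polynomial growth of the increment moments" implicitly uses this correct bound, but it contradicts the half-plane claim and must replace it, after which one checks $\sum_j (jR)^{2p}\alpha^{j}<\infty$.

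The second gap is in the key uniform estimate $\alpha<1$, which you propose to prove by viewing the shell-crossing survival probability as "a harmonic function on a fundamental domain" and invoking periodicity plus compactness. This does not apply as stated: the circles $|z|=jR$ are centered at the starting point and are not adapted to $\Lambda$, so the shell-crossing survival probability is \emph{not} lattice-periodic, and moreover the region involved (an annulus minus disks) changes with $j$, so compactness over one fundamental cell gives nothing uniform in $j$. The repair is to localize: let $d$ be the diameter of a fundamental cell, and note that any ball of radius $2d$ contains a complete removed disk; the probability of exiting such a ball without hitting $\partial D$, as a function of the ball's center modulo $\Lambda$, is continuous, strictly less than $1$, and periodic, hence bounded by some $\beta<1$ by compactness on the torus. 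Taking $R>2d$, reaching the next circle forces the path to exit at least one (indeed $\lfloor R/(2d)\rfloor$) such balls, so by the strong Markov property the shell-crossing survival probability is at most $\beta<1$, uniformly in $j$ and in the starting point. With these two repairs your argument closes; the comparison with the paper is instructive, since its edge-based stopping times are designed precisely so that both uniformity statements (bounded increment moments and survival probability bounded away from $1$) are immediate consequences of periodicity.
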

\begin{proof}
    Let $\{z_1, w_1\}$ denote the segment between 
$z_1$ and $w_1$.
Define $\mc{L} := \{\{\tilde z, \tilde w\}: \tilde z, \tilde w \in \Lambda \text{ and } \tilde z - \tilde w = w \text{ or }\tilde z - \tilde w = z\}$,
meaning the set of edges which form a grid in $\CC$.
Denote the grid by $L = \bigcup_{e \in \mc{L}}e$. Define the process $l_t$ taking values in $\mc{L}\cup \emptyset$ by
$$
    l_t(\omega) = \{e\in\mc{L}: \text{where $e$ is the last edge hit in the history of $\omega(0,t)$}\},
$$
and $l_t(\omega) = \emptyset$ if $\omega$ has not hit an edge yet.
Let $\tau_j = \inf\{t>\tau_{j-1}: Z_t \in L \setminus l_t\}$, with $\tau_0 = 0$. Let $\tilde \tau_j = \tau_j \wedge T(D)$. Then, by Minkowski's inequality,
$$
    \Vert \tilde \tau_n \Vert_p^p = \left\Vert \sum_{j=0}^n (\tilde\tau_j - \tilde\tau_{j-1})\right\Vert_p^p\leq
      \sum_{j=0}^n \left\Vert\tilde\tau_j - \tilde\tau_{j-1}\right\Vert_p^p.
$$
The random variable $\til\tau_j - \til \tau_{j-1}$ is equal to the exit time of Brownian motion from a bounded region. Therefore
$E[|\til\tau_j - \til \tau_{j-1}|^p]<\infty$ for all $p$. 
$$
    \Vert\til\tau_j - \til \tau_{j-1}\Vert_p^p \leq E_a[(\tau_j - \tau_{j-1})^p|\tau_{j-1} < T(D)] P_a(\tau_{j-1} < T(D)).
$$
Here we choose $a\in L\cap D$ which maximizes the quantity on the right. The maximum exists as the region is bounded. The event $\{\tau_{j-1} < T(D)\}$ encodes the probability 
Brownian motion hits $j-1$ edges
without hitting a disk. Let $\alpha := \sup_{z\in L\cap D}P_a(\tau_2 < T(D))$, 
this quantity is strictly less than $1$ because the disks inhibit Brownian motion
started on $L$ from immediately hitting another edge without exiting the domain first.
$$
    \Vert\til\tau_j - \til \tau_{j-1}\Vert_p^p \leq C_p \alpha^j
$$
Therefore,
$$
   \Vert T(D)\Vert_p^p = \lim_{j\to\infty} \Vert \til \tau_n \Vert_p^p \leq \sum_{j=1}^\infty C_p \alpha^j  < \infty.
$$
Since $C_p<\infty$ for all $p$, we have $bh(T(D)) = \infty$.
\end{proof}
Again, if we extract only the parts of the proof that were needed we can generalize the result.
We required that the sets we remove at each vertex not become arbitrarily small. 
This ensures $\sup_{a\in L\cap D}P_a(\tau_2 < T(D)) < 1$, which was required so that the final bound was finite. 
For the stopping times in the proof, it was necessary to consider the first exit time from the 
grid with the most recently visited segment removed. 
This exit time is equal to the first exit from the polygon remaining after removing that segment. 
The proof relied on the boundedness of this polygon to ensure the finiteness of all moments. 

We must also note the role of our particular choice of graph induced from the lattice. The main importance was the maximum possible sized component 
of $\CC\setminus (L\setminus e)$ where $e\in\mc{L}$. We could remove countably many edges, and the result would remain the same, provided the maximum sized component was bounded. However, if we removed
all the edges to the left of the origin, the proof would instead yield $bh(D) \geq 1/2$. This is due to the property that the largest component of $\CC\setminus L$ is now the left half-plane.
\begin{theorem}
Let $\{U\}_{j\in \NN}$ be a collection of domains which contain the origin. Let $C =\{c_j\}_{j\in \NN}$ be a discrete subset of $\CC$ such that for all $i, j$ 
with $i\neq j$ it holds that $(U_i + c_i)\cap (U_j + c_j) = \emptyset$. Consider the domain $D$ given by,
$$
    D = \CC \setminus \bigcup_{j\in\NN} (U_j + c_j).
$$
Let $\mc{L}$ be a graph embedded in $\CC$ whose vertices are a subset of $C$, let $L$ be the union of all the edges in $\mc{L}$.
If for $p\in\RR^+$ there exists some $\delta < 1$ and $N$ such that for every edge $e\in\mc{L}$, the following conditions hold.
\begin{itemize}
    \item Define $\tau = \inf_{t>0}\{t: B_t \in L \setminus e\}$, it holds that $\sup_{a\in e\cap D} P_a(\tau < T(D)) \leq \delta$.
    \\
\item Let $U$ be the component of $\CC\setminus (L\setminus e)$   
containing $e$, it holds that $\sup_{a\in e\cap D} E_a[\tau_U^p] \leq N$.
\end{itemize}
Then $bh(D) \geq p$.
\end{theorem}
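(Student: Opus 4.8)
The plan is to follow the template established in the proofs of Theorems~\ref{halfplane} and~\ref{lattice}, with the graph $L$ playing the role of the boundary and the two bulleted hypotheses supplying, in abstract form, exactly the two estimates that were verified by hand in the lattice case. First I would introduce the ``last edge visited'' process $l_t$ taking values in $\mc{L}\cup\{\emptyset\}$ exactly as in the proof of Theorem~\ref{lattice}, and define the successive hitting times of genuinely new edges by $\tau_0=0$ and $\tau_j=\inf\{t>\tau_{j-1}: B_t\in L\setminus l_t\}$, together with the stopped versions $\til\tau_j:=\tau_j\m T(D)$. The first thing to check is that $\til\tau_j\uparrow T(D)$ almost surely; here I would use that the vertex set $C$ is discrete, so $\mc{L}$ is locally finite, and hence a Brownian path, being bounded on compact time intervals, can meet only finitely many edges before any finite time. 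Consequently the $\tau_j$ cannot accumulate at a finite value, which forces $\til\tau_j\to T(D)$.

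With the stopping times in place, I would write $\til\tau_k=\sum_{j=1}^k(\til\tau_j-\til\tau_{j-1})$ and apply Minkowski's inequality as in \eqref{viking}, namely $\Vert\til\tau_k\Vert_p^p\leq\sum_{j=1}^k\Vert\til\tau_j-\til\tau_{j-1}\Vert_p^p$ when $p\leq 1$ (using subadditivity of $x\mapsto x^p$), and the ordinary triangle inequality $\Vert\til\tau_k\Vert_p\leq\sum_{j=1}^k\Vert\til\tau_j-\til\tau_{j-1}\Vert_p$ when $p>1$. Either way the goal is reduced to showing that the increments decay geometrically.

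The two bullets are designed to give precisely this. For the size of a single increment, I would observe that, conditioned on $\mc{F}_{\tau_{j-1}}$ and on the event $\{\tau_{j-1}<T(D)\}$, the starting point $B_{\tau_{j-1}}$ lies in $e\cap D$ for the current edge $e=l_{\tau_{j-1}}$, and the unstopped increment $\tau_j-\tau_{j-1}$ is exactly the first time the shifted path reaches $L\setminus e$, i.e. the exit time $\tau_U$ from the component $U$ of $\CC\setminus(L\setminus e)$ containing $e$. The second bullet then bounds its $p$-th moment by $N$, uniformly over all edges and all admissible starting points. For the survival probability $P_a(\tau_{j-1}<T(D))$, I would peel off one step at a time using the strong Markov property: on $\{\tau_{j-2}<T(D)\}$ the conditional probability of hitting a new edge before exiting $D$ is $P_{B_{\tau_{j-2}}}(\tau<T(D))\leq\delta$ by the first bullet, so $P_a(\tau_{j-1}<T(D))\leq\delta^{j-2}$ (the exact offset in the exponent is immaterial). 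Combining via the law of total probability gives $\Vert\til\tau_j-\til\tau_{j-1}\Vert_p^p\leq N\delta^{j-2}$, and summing the resulting geometric series and letting $k\to\infty$ by monotone convergence yields $E_a[T(D)^p]<\infty$, hence $bh(D)\geq p$.

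The main obstacle, as in the concrete cases, is not any single estimate but the careful bookkeeping that makes the strong Markov iteration legitimate with a single pair of constants $\delta,N$: one must check that $\{\tau_{j-1}<T(D)\}$ is $\mc{F}_{\tau_{j-1}}$-measurable, that $B_{\tau_{j-1}}$ genuinely lands in $e\cap D$ so that the uniform bounds apply (handling the measure-zero possibility of landing at a vertex where several edges meet), and that the identification of the increment with the component exit time $\tau_U$ is valid even when $U$ is unbounded. The uniformity built into the two hypotheses is exactly what converts these per-step estimates into geometric decay, and verifying $\til\tau_j\to T(D)$ via local finiteness of $\mc{L}$ is the other point that deserves explicit attention.
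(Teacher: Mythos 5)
Your proposal is correct and takes essentially the same route as the paper: the paper's entire proof of this theorem is the single sentence that it ``follows identically to the proof of Theorem \ref{lattice},'' and your write-up is precisely that lattice argument transplanted to the general graph setting (last-edge process $l_t$, hitting times of new edges, Minkowski/subadditivity, geometric decay from the two bullets). If anything you are more careful than the paper --- handling $p>1$ separately and justifying $\til\tau_j\to T(D)$, though for the latter the cleanest argument is not local finiteness (which discreteness of $C$ alone does not guarantee for embedded edges) but the observation that $P_a(\tau_{j}<T(D))\leq\delta^{j-1}\to 0$ already forces $\tau_j\geq T(D)$ eventually, almost surely.
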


\begin{proof}
    The proof follows identically to the proof of Theorem \ref{lattice}, with the lattice and grid defined as above.
\end{proof}
The condition above can be written more concisely 
\begin{align*}
    \sup_{e\in\mc{L}}\sup_{a\in e\cap D} P_a(\tau<T(D)) <&\; 1,\\
    \sup_{e\in\mc{L}}\sup_{a\in e\cap D} E_a[\tau_U^p] <\;& \infty,
\end{align*}
for $\tau$ and $U$ as defined above.\\

For an example, let $W = \{z:-\alpha\leq Arg(z)\leq\alpha\}$, let $C = \Z^2 \cap W^c$ and let $\mc{L}$ be the subgraph of the $\Z^2$ lattice with 
vertices at $C$. For the collection of domains we let $U_j := B_r(0)$ for all $j$, where $r < 1$. Let $D$ be as above,
$$
    D = \CC \setminus \bigcup_{c \in C} B_r(c).
$$
Then the above theorem implies $bh(D) = bh(W) = \frac{\pi}{4\alpha}$.

\section{Construction of wedge-like domains with interesting properties} \label{achievecrit}

For many of the previous examples, and indeed for virtually all known examples with $0 < bh(D) < \infty$, it is true that $E[T(D)^{bh(D)}] = \infty$. This raises the following question: does there exist a domain $D$ with $0 < bh(D) < \infty$ such that the $bh(D)$-th moment of $T(D)$
is finite? The answer is affirmative, there exists such a simply-connected example for any $bh(D) \geq 1/4$. Any simply connected domain satisfies $bh(D) \geq 1/4$ (\cite{burk}), so there is no such simply connected domain for smaller exponents, however if $0<p\leq \frac{1}{4}$ one can still construct a stopping time $\tau\in\mc{H}$ which works, although it is no longer the exit time of a domain. In particular, we have the following.

\begin{theorem}\label{example_finite}
For all $p\geq\frac{1}{4}$ there exists a domain $D$ such that $bh(D) = p$ and $E[T(D)^p] < \infty$. 
If $0<p\leq \frac{1}{4}$ there exists a stopping time $\tau\in\mc{H}$ such that the same is true.
\end{theorem}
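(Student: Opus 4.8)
The plan is to push everything through Burkholder's equivalence $E[\tau^p]<\infty \Longleftrightarrow \Vert f\Vert_{2p}<\infty$ and thereby reduce the theorem to the construction of a single analytic function whose Hardy norm is finite at its own critical exponent. Writing $q=2p$, it suffices to produce an $f$ holomorphic on $\DD$ with Hardy number $h(f)=q$ and with the \emph{critical} norm $\Vert f\Vert_q$ finite. Granting such an $f$, the two cases split cleanly. When $0<p\le 1/4$, so $q\le 1/2$, no univalence is available or needed: the associated time change $\tau=\sigma_{T(\DD)}$ lies in $\mc{H}$ by definition, and the extension of Burkholder's result to $\mc{H}$ gives $E[\tau^{p'}]<\infty\Leftrightarrow\Vert f\Vert_{2p'}<\infty$, whence $bh(\tau)=h(f)/2=p$ while $E[\tau^p]<\infty$. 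When $p\ge 1/4$, so $q\ge 1/2$, I will instead arrange that $f$ is univalent, so that $D=f(\DD)$ is a genuine simply connected domain; since the conformal map realises the infimum defining $h(D)$ we get $bh(D)=h(D)/2=h(f)/2=p$, and $E[T(D)^p]<\infty$ follows from $\Vert f\Vert_{2p}<\infty$.

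The engine is a power singularity decorated by a slowly varying logarithmic factor. Fix $\gamma>1/q$ and set, with principal branches,
\[
 f(z) = (1-z)^{-1/q}\left(\log\frac{e}{1-z}\right)^{-\gamma}.
\]
Because $1-z$ lies in the right half-plane for $z\in\DD$, both factors are well defined and holomorphic, and the only boundary singularity is at $z=1$. On the circle $z=e^{i\theta}$ one has $|1-z|\asymp|\theta|$ and $\log\frac{e}{1-z}\asymp\log\frac1{|\theta|}$ as $\theta\to0$, so that $|f(e^{i\theta})|^{q}\asymp |\theta|^{-1}\big(\log\frac1{|\theta|}\big)^{-\gamma q}$. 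Hence
\[
 \Vert f\Vert_{q}^{q}\asymp \int_{0}\frac{d\theta}{\theta\,(\log 1/\theta)^{\gamma q}}<\infty
\]
precisely because $\gamma q>1$. For an exponent $q'<q$ the power becomes $|\theta|^{-q'/q}$ with $-q'/q>-1$, giving a convergent integral as well, whereas for $q'>q$ the exponent is $<-1$ and the integral diverges. Therefore $h(f)=q$ and the critical norm $\Vert f\Vert_{q}$ is finite, exactly as required; the logarithmic factor is what converts the usual critical divergence of a wedge into a convergent Bertrand integral.

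It remains to realise $f$ as a conformal map when $p\ge 1/4$. The undecorated map $(1-z)^{-1/q}$ sends $\DD$ onto a wedge of total opening $\pi/q=\pi/(2p)\le 2\pi$, and is univalent there in exactly this range; the logarithmic factor only contracts the far reaches of this wedge. For $p>1/4$ the power map is univalent with room to spare, and I would confirm that univalence survives the perturbation by a standard argument (checking that $\Re\big(zf'(z)/f(z)\big)$ keeps a fixed sign, or comparing boundary arguments on the two sides approaching $z=1$). The borderline $p=1/4$, where the underlying wedge degenerates to the slit plane and the power map sits exactly at the edge of univalence, is the main obstacle. There I would bypass $f$ and build $D$ directly as a star-shaped generalised sector $\{\rho e^{i\phi}:\ \rho>0,\ |\phi|<\psi(\rho)\}$ whose half-angle $\psi(\rho)$ increases to $\pi/(4p)$ with a slowly vanishing defect (an inverse power of $\log\rho$, tuned to match the factor above); simple connectivity is then automatic, and the identical boundary computation, carried out on the conformal map of this explicit domain, recovers $h(D)=2p$ together with a finite critical norm. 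Once univalence (or this direct construction) is secured, the equivalences of Burkholder close the argument in both ranges of $p$.
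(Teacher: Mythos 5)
Your route is genuinely different from the paper's: the paper never touches Hardy spaces directly, instead building the domain probabilistically as an increasing union of truncated wedges $D_n = (\{|z|\geq R_n\}\cap W_n)\cup D_{n-1}$ with opening angles $\alpha_n \nearrow \pi/(4p)$, choosing $R_n$ by dominated convergence so that $E[T(D_n)] \leq C(1+\tfrac12+\cdots+\tfrac1{2^n})$, and concluding by monotone convergence plus the inclusion of translated wedges; your plan is to construct one explicit analytic function with finite critical Hardy norm and transfer everything through Burkholder's equivalence. The function-theoretic core of your proposal is correct: for $f(z)=(1-z)^{-1/q}\bigl(\log\frac{e}{1-z}\bigr)^{-\gamma}$ with $\gamma q>1$, the estimate $|1-re^{i\theta}|\asymp (1-r)+|\theta|$ gives $\Vert f\Vert_q<\infty$ uniformly in $r$ and $h(f)=q$, so the second clause of the theorem ($0<p\leq\frac14$, a stopping time $\tau\in\mc{H}$) is essentially complete in your write-up, since no univalence is needed there.

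The genuine gap is in the first clause, $p\geq\frac14$, where you need $f$ to be univalent (or at least a covering map) so that Burkholder's equivalence applies to $T(D)$ rather than to $\sigma_{T(\DD)}$; this is exactly the step you defer with ``I would confirm.'' The specific criterion you gesture at does not apply: $\mathrm{Re}\bigl(zf'(z)/f(z)\bigr)>0$ is the starlikeness test for functions with $f(0)=0$, whereas your $f$ has $f(0)=1$. Moreover, univalence is not automatic for your normalization: writing $\zeta=\log(1-z)$, which maps $\DD$ onto a convex domain in the strip $\{|\mathrm{Im}\,\zeta|<\pi/2\}$, one has $\log f = -\frac1q\zeta-\gamma\log(1-\zeta)$, and a Noshiro--Warschawski argument forces $\gamma$ to be \emph{small} relative to $1/q$, which conflicts with the requirement $\gamma q>1$; to repair this one must replace $\log\frac{e}{1-z}$ by $A-\log(1-z)$ with $A$ large, verify $\mathrm{Re}\,(\log f)'<0$ on the convex image, and then check that the range of $\mathrm{Im}\log f$ lies in the open interval $(-\pi/(2q),\pi/(2q))$ so that no two values of $\log f$ differ by $2\pi i$ --- an argument that in fact also covers your ``main obstacle'' $p=\frac14$ (where the interval is open of length exactly $2\pi$), but none of which appears in your proposal. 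Your fallback for $p=\frac14$, the generalized sector with half-angle $\psi(\rho)\nearrow \pi/(4p)$, merely asserts that its conformal map has Hardy number $2p$ with finite critical norm; since finiteness at the critical exponent is precisely the delicate point the whole theorem is about, asserting it for an implicit conformal map is not a proof. By contrast, the paper's probabilistic construction handles $p=\frac14$ with no additional difficulty, which is the main structural advantage of its approach over yours.
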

\begin{proof}
We will prove the result for $p=1$. For general $p$ the result will follow from this, although as indicated above for $p \leq \frac{1}{4}$ it is no longer the exit time of a domain (more explanation will be given below). Let $D:=\{z:-\frac{\pi}{4} \leq Arg z < \frac{\pi}{4}\}$ be the right quarter-plane. We note
$bh(D) = 1$ and $E[T(D)] = \infty$. Let $\{\alpha_n\}_{n\geq0}$ be a sequence of strictly positive real numbers which monotonically increase to $\pi/4$. Let $W_n = \{z : -\alpha_n \leq Arg z < \alpha_n\}$. $\{R_n\}_{n\geq0}$ will be a sequence of positive real numbers monotonically increasing to $\infty$, whose values will be defined shortly. Define $D_1 = W_1$ and (see Figure \ref{growing wedges} below)
$$
    D_n = \left(\{z: |z|\geq R_n\} \cap W_n\right) \cup D_{n-1}.
$$
Noting that $bh(W_n) = \pi/(4\alpha_n)<1$, we have that $E_1[\tau_{D_1}] = C < \infty$. From here we will fix Brownian motion to start at $1$ as this point is common to all domains. 

Now choose $R_2$ such that $E_1[T(D_2)] \leq C(1+\frac{1}{2})$; we can find such an $R_2$, because 
if we take $R_2\to\infty$ it follows that $T(D_2)\to T(D_1)$, and as $T(D_2)\leq T(W_2)\in L^1$, by the dominated convergence theorem it holds that $E_1[T(D_2)] \to E[T(D_1)]$. 

This indicates the manner that the sequence $R_n$ is chosen. In general, we choose $R_n$ such that,
$$
    E[T(D_n)] \leq C(1+\frac{1}{2} + \ldots + \frac{1}{2^n} )
$$
This is possible, because by the same argument as before we have $E[T(D_n)] \to E[T(D_{n-1})]$ when $R_n$ is taken to $\infty$. The $D_n$'s are increasing in size, so we may let $D_\infty = \lim_{n\to\infty} D_{n}$. It follows by the monotone convergence theorem that $E[T(D_\infty)] = \lim_{n \to \infty} E[T(D_n] \leq 2C < \infty$. Furthermore, $D_\infty$ contains a translation of every wedge $W_n$, so it follows that $bh(W_n) \geq bh(D_\infty)$ for every $n$. This proves that $1\geq bh(D_\infty)$, but we also know $E[T(D_\infty)]<\infty$, so indeed
$bh(D_\infty) = 1$. This completes the proof for $p=1$.

\if2
The case of $p < 1/4$ corresponds to wedges $W_\alpha$ with $\alpha > \pi$. In this case, we proceed as above. This is equivalent to determining the 
Brownian hardy number for $\tau\in\mc{H}$ being the projection of the stopping time of $\tau_{\DD}$ through $(1+z^{2/\pi \alpha})/(1-z^{2/\pi\alpha})$
\fi

We may proceed in exactly the same manner for general $p$, using a wedge of angle $\frac{\pi}{2p}$ in place of the one of $\frac{\pi}{2}$. Alternatively, we may project the exit time of $D_\infty$ by the analytic function $z \to z^{1/p}$ in order to achieve the same effect. If $p>1/4$ this will still project to the exit time of a domain, but if $0 < p \leq 1/4$ the domain will essentially wrap over itself, and the resulting stopping time will no longer be the exit time of a domain. 
\end{proof}


\begin{figure}[H] 
\begin{center}
\includesvg[width=\textwidth, height=8cm]{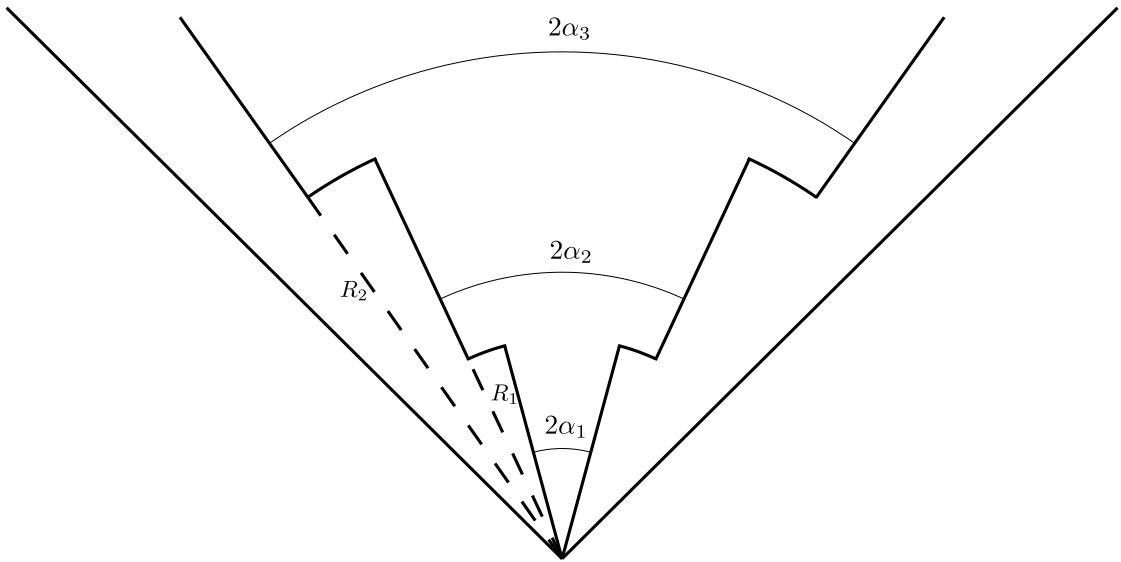}
\caption{An example of $D_n$, rotated for formatting.}\label{growing wedges}
\end{center}
\end{figure}

The domain $D_\infty$ constructed above is interesting for another reason, and that is that it gives the example of a domain whose exit time has finite expectation, but where the expectation does not have quadratic growth as a function of the initial point (see the discussion at the end of the introduction, Section \ref{intro}). In particular, we have the following result.

\if2 Suppose $V$
is a domain with $u(x) := E_x[T(V)] <\infty$. It follows by Dynkin's theorem that $u$ satisfies
the PDE $\Delta u = -2$ with $u\equiv0$ on $\partial V$. If $V$ is unbounded, this PDE does not
have a unique solution. However, if we suppose one more condition, then the solution must be the
expected exit time from $x$. The following theorem is due to Burkholder \cite{burk}.
\begin{theorem}
    Let $R$ be a domain with $E[T(R)]< \infty$. Suppose $u:R\to\RR$ satisfies $\Delta u = -2$
    in $R$ and $u\equiv 0$ on $\partial R$, and there exists some constant $C>1$ with
    $$
        |u(z)| \leq C(1+|z|^2).
    $$
    Then $u(z) = E_z[T(R)]$.
\end{theorem}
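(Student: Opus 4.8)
The plan is to recover $u$ probabilistically through an optional stopping argument, exactly as in the classical solution of the torsion problem, with the growth hypothesis (iii) supplying precisely the integrability needed to push the argument through on the unbounded domain $R$. Write $T = T(R)$. Since $\Delta u = -2$, the function $u + \tfrac{1}{2}|z|^2$ is harmonic, so $u \in C^2(R)$ and It\^o's formula applies: on $[0,T)$ one has $du(B_t) = \nabla u(B_t)\cdot dB_t - dt$, so that $M_t := u(B_{t\wedge T}) + (t\wedge T)$ is a continuous local martingale with $M_0 = u(z)$. To turn this into an honest identity I would fix an exhaustion $R_1 \subset\subset R_2 \subset\subset \cdots$ of $R$ with $\bigcup_n R_n = R$ and set $\sigma_n := T(R_n)\wedge n$. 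These are bounded stopping times increasing to $T$ (using that $T<\infty$ a.s., which follows from $E_z[T]<\infty$), and on $[0,\sigma_n]$ the path stays in the compact set $\overline{R_n}$, where $\nabla u$ is bounded, so $M_{t\wedge\sigma_n}$ is a genuine martingale. Optional stopping at the bounded time $\sigma_n$ then yields
\[
u(z) = E_z[M_{\sigma_n}] = E_z[u(B_{\sigma_n})] + E_z[\sigma_n].
\]

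Next I would let $n\to\infty$. Since $\sigma_n \uparrow T$, monotone convergence gives $E_z[\sigma_n]\uparrow E_z[T]$, which is finite by hypothesis, so the entire result reduces to showing that the boundary term vanishes in the limit, that is $E_z[u(B_{\sigma_n})]\to 0$. Pointwise this is immediate: $\sigma_n\to T$ together with path continuity gives $B_{\sigma_n}\to B_T\in\partial R$, and since $u$ vanishes on $\partial R$ we get $u(B_{\sigma_n})\to 0$ almost surely. The whole difficulty is thus the interchange of limit and expectation, and this is exactly where the quadratic growth condition (iii) becomes indispensable.

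The hard part, and its resolution, is establishing uniform integrability of the family $\{u(B_{\sigma_n})\}_n$ by dominating it. By (iii) and $\sigma_n \le T$ we have $|u(B_{\sigma_n})| \le C\bigl(1+|B_{\sigma_n}|^2\bigr) \le C\bigl(1 + \sup_{t\le T}|B_t|^2\bigr)$, so it suffices to prove $E_z\bigl[\sup_{t\le T}|B_t|^2\bigr]<\infty$. For this I would use that $|B_{t\wedge T}|^2 - 2(t\wedge T)$ is a local martingale; localizing by $\rho_m := \inf\{t: |B_t|\ge m\}$, taking $m\to\infty$, and applying Fatou gives $\sup_t E_z\bigl[|B_{t\wedge T}|^2\bigr] \le |z|^2 + 2E_z[T] < \infty$, so $B_{t\wedge T}$ is an $L^2$-bounded martingale. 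Doob's $L^2$ maximal inequality then yields $E_z\bigl[\sup_t|B_{t\wedge T}|^2\bigr] \le 4\bigl(|z|^2 + 2E_z[T]\bigr)<\infty$, providing an integrable dominating variable. Dominated convergence now gives $E_z[u(B_{\sigma_n})]\to E_z[u(B_T)]=0$, and combined with the display above this proves $u(z) = E_z[T]$. The one genuinely delicate point to watch is the almost-sure boundary convergence $u(B_{\sigma_n})\to 0$, which implicitly requires that $u$ attain its boundary value $0$ continuously (equivalently, that the relevant boundary points be regular); under the standing interpretation of condition (ii) this holds, and it is the only place where topological regularity of $\partial R$ enters.
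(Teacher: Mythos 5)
Your proof is correct. Note first that the paper itself gives no proof of this statement: it appears (as Theorem \ref{burk_thm}) attributed to Burkholder with a citation to \cite{burk}, and an analytic proof is referenced in \cite{markowsky2013method}, so there is no in-paper argument to compare against. Your route is the standard probabilistic one, and every step checks out: since $\Delta u=-2$, the function $u(z)+\tfrac12|z|^2$ is harmonic, so $u$ is smooth and It\^o's formula makes $u(B_{t\wedge T})+(t\wedge T)$ a local martingale; localizing by $\sigma_n=T(R_n)\wedge n$ for an exhaustion $R_n\subset\subset R$ gives the exact identity $u(z)=E_z[u(B_{\sigma_n})]+E_z[\sigma_n]$, with $\sigma_n\uparrow T$ (the standard exhaustion argument, valid since $\overline{R_m}\cap\partial R_n=\emptyset$ for $n>m$); monotone convergence handles $E_z[\sigma_n]\uparrow E_z[T]$; and the genuinely delicate point, killing the boundary term, is resolved correctly. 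Your uniform integrability argument is exactly right and is where both hypotheses earn their keep: $|B_t|^2-2t$ is a local martingale, so localization and Fatou give $\sup_t E_z[|B_{t\wedge T}|^2]\leq |z|^2+2E_z[T]<\infty$ (this is where the assumption $E[T(R)]<\infty$ is essential), Doob's $L^2$ maximal inequality then dominates $\sup_{t\leq T}|B_t|^2$, and the quadratic growth bound (iii) converts this into a dominating variable for $u(B_{\sigma_n})$. Your closing caveat is also well placed: the almost-sure convergence $u(B_{\sigma_n})\to 0$ does require that $u$ attain its zero boundary values continuously, which is the implicit reading of condition (ii) throughout the paper. One small remark for context: the version of the theorem stated in the paper's introduction assumes positivity of $h$ rather than finiteness of $E[T(D)]$, and deducing that variant would require additional work; but the statement you were given includes the hypothesis $E[T(R)]<\infty$, and for that statement your proof is complete.
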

The domain $D_\infty$ shows that the condition is sufficient, but not necessary. 
\fi

\begin{proposition}
    Let $u(a) = E_a[T(D_\infty)]$. Then there does not exist a constant $C\geq1$ such that $|u(a)| \leq C(1+|a|^2)$.
\end{proposition}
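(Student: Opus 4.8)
The plan is to exploit the fact that, far from the origin, $D_\infty$ opens up to wedges whose opening angle approaches that of the quarter-plane, and for which the expected exit time carries a multiplicative constant that blows up. The heart of the argument is therefore an explicit computation of the torsion function on a symmetric wedge, combined with the monotonicity and translation-invariance of exit times.

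First I would record the expected exit time of a symmetric wedge $W_\alpha = \{re^{i\theta} : r>0,\ |\theta|<\alpha\}$ for $\alpha < \pi/4$. Solving the torsion problem $\Delta h = -2$ with $h=0$ on $\partial W_\alpha$ via the ansatz $h = r^2(A\cos 2\theta + B)$, a one-line computation in polar coordinates gives $\Delta h = 4B$, so $B=-\tfrac12$, and the boundary condition forces $A = \tfrac{1}{2\cos 2\alpha}$. This yields
$$h(re^{i\theta}) = \frac{r^2}{2}\cdot\frac{\cos 2\theta - \cos 2\alpha}{\cos 2\alpha}.$$
This $h$ is positive on $W_\alpha$ (since $\cos 2\theta > \cos 2\alpha > 0$ there) and has quadratic growth, so by Theorem \ref{burk_thm} it equals $E_z[T(W_\alpha)]$. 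In particular, on the positive real axis $E_r[T(W_\alpha)] = \tfrac{r^2}{2}\cdot\tfrac{1-\cos 2\alpha}{\cos 2\alpha}$, whose coefficient $\tfrac{1-\cos 2\alpha}{2\cos 2\alpha}$ diverges as $\alpha \uparrow \pi/4$, since then $\cos 2\alpha \downarrow 0$.

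Next I would show $D_\infty$ contains translated copies of the wedges $W_n$. For real $c>0$ and $z \in W_n + c$ one has $\Re z \geq c$ (as $\cos\theta>0$ on the wedge), hence $|z|\geq c$; and since the axis of the cone lies along the positive reals, $W_n + c \subseteq W_n$. Taking $c = R_n$, every point of $W_n + R_n$ lies in $\{|z|\geq R_n\}\cap W_n \subseteq D_\infty$, so $W_n + R_n \subseteq D_\infty$. Then for $a = R_n + r$ with $r>0$, monotonicity and translation-invariance give
$$E_a[T(D_\infty)] \;\geq\; E_a[T(W_n + R_n)] \;=\; E_r[T(W_n)] \;=\; \frac{r^2}{2}\cdot\frac{1-\cos 2\alpha_n}{\cos 2\alpha_n},$$
while $|a| = R_n + r$. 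Dividing by $1+|a|^2$ and letting $r\to\infty$ with $n$ fixed shows $\sup_{a}\frac{E_a[T(D_\infty)]}{1+|a|^2} \geq \tfrac12\cdot\tfrac{1-\cos2\alpha_n}{\cos2\alpha_n}$, and letting $n\to\infty$ (so $\alpha_n\uparrow\pi/4$) makes the right-hand side diverge, which is precisely the claim.

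The argument is otherwise clean, so the only point deserving real care is the order of limits: I want the full wedge constant $\tfrac12\tfrac{1-\cos2\alpha_n}{\cos2\alpha_n}$ to survive division by $1+|a|^2$, which requires taking $r$ large relative to the (fixed) offset $R_n$ before sending $n\to\infty$. A diagonal choice $r = r_n$ with $r_n/R_n \to \infty$ makes this rigorous, since then $r_n^2/(1+(R_n+r_n)^2)\to 1$. A secondary check is the containment $W_n + R_n \subseteq D_\infty$, which rests on the elementary fact that a convex cone with vertex at the origin is closed under translation along its axis.
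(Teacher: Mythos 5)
Your proposal is correct, and it takes a genuinely different route from the paper. The paper also starts from the exact wedge torsion function (in Cartesian form, $E_{x+yi}[T(W_n)] = \frac{x^2\tan^2\alpha_n - y^2}{1-\tan^2\alpha_n}$, which agrees with your polar formula since $\frac{1-\cos 2\alpha}{2\cos 2\alpha} = \frac{\tan^2\alpha}{1-\tan^2\alpha}$), but it works with the truncated wedge $\tilde W_n = W_n \cap \{|z|> R_n\} \subseteq D_n \subseteq D_\infty$ rather than a translated copy of the full wedge. Since $\tilde W_n$ is not itself a wedge, the paper must recover the full-wedge exit time by a strong Markov decomposition at the circle $\{|z|=R_n\}$, namely $E_{2R_n}[T(W_n)] \leq E_{2R_n}[T(\tilde W_n)] + E_{R_n}[T(W_n)]$, combined with the scaling identity $E_{R_n}[T(W_n)] = \frac{1}{4}E_{2R_n}[T(W_n)]$, yielding $\frac{3}{4}E_{2R_n}[T(W_n)] \leq E_{2R_n}[T(D_\infty)]$ and hence divergence of the ratio at the concrete points $a = 2R_n$. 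You instead observe that the convex cone $W_n$ is closed under translation along its axis, so the full wedge $W_n + R_n$ sits inside $\{|z|\geq R_n\}\cap W_n \subseteq D_\infty$, and then monotonicity plus translation invariance gives the exact wedge value with no probabilistic decomposition at all; the price is that you must push the base point out to $|a| \gg R_n$ (your diagonal choice $r_n/R_n \to \infty$) to keep the full wedge constant after dividing by $1+|a|^2$, whereas the paper gets a fixed fraction $\frac{3}{4}$ of that constant already at $|a| = 2R_n$. Your argument is the more elementary and self-contained of the two; the paper's is more quantitative about where the quadratic bound fails, and its stopping-time decomposition is of a piece with the techniques used elsewhere in the paper.
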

\begin{proof}
Let $\alpha_n, R_n, D_n$ be the sequences defined in the proof of Theorem 
\ref{example_finite}, and recall that
$W_n:=\{z:-\alpha_n\leq Argz \leq \alpha_n\}$. The boundary of $W_n$ is a subset of the lines defined by $y=\pm m_n x$, where $m_n = \tan \alpha_n \in (0,1)$, and it is straightforward to check that $h(x,y) = \frac{m_n^2 x^2 - y^2}{1-m_n^2}$ satisfies the conditions of Theorem \ref{burk_thm}, and thus 

$$
 E_{x+yi}[T(W_n)] = \fr{x^2 m^2_n - y^2}{1-m^2_n} = \fr{x^2\tan^2\alpha_n - y^2}{1-\tan^2\alpha_n}.
$$
If $a = x$ on the real line, then $ E_{x}[T(W_n)] =x^2\tan^2\alpha_n/(1-\tan^2\alpha_n) $. 
Let $\tilde W_n  := W_n \cap \{|z| = R_n\}$. Since $T(\tilde W_n) \leq T(W_n)$, by the strong Markov property, we have

\begin{equation} \label{vicec}
    E_{2R_n}[T(W_n)] \leq E_{2R_n}[T(\tilde W_n)] + E_{R_n}[T(W_n)].    
\end{equation}

This can be seen path-wise: if the path exits $\tilde W_n$ on $\partial W_n$ then $T(\tilde W_n) = T(W_n)$; otherwise, if the path exits $\tilde W_n$ at $R_n e^{i \theta} \in \{|z| > R_n\}$, then we have 

$$
E_{R_ne^{i \theta}}[T(W_n)] = \fr{R_n^2\cos^2\theta \tan^2\alpha_n - R_n^2 \sin^2 \theta}{1-\tan^2\alpha_n} \leq \fr{R_n^2\tan^2\alpha_n}{1-\tan^2\alpha_n} = E_{R_n}[T(W_n)].
$$ 

In other words, the exit time from
$W_n$ can be bounded above by restarting the path at $R_n$ upon exiting $\tilde W_n$. In addition, it holds that $E_{R_n}[T(W_n)] = (1/4) E_{2R_n}[T(W_n)]$. This is a special case of the identity $E_{a}[T(W_n)] = \frac{1}{r^2}E_{ra}[T(W_n)]$ for $r>0$, which follows from the fact that $W_n$ is invariant under the dilation $z \to rz$ and the time scaling of Brownian motion. Thus, from \eqref{vicec}, we obtain
$$
    \fr{3}{4}E_{2R_n}[T(W_n)] \leq E_{2R_n}[T(\tilde W_n)] \leq E_{2R_n}[T(D_n)]\leq E_{2R_n}[T(D_\infty)].
$$
The last set of inequalities holds from the inclusions $\tilde W_n \subset D_n \subset D_{\infty}$. Finally
$$
    \fr{3}{4} \fr{E_{2R_n}[T(W_n)]}{4R_n^2} = \fr{3\tan^2\alpha_n}{16(1-\tan^2\alpha_n)} \leq \fr{E_{2R_n}[T(D)]}{4R_n^2}.
$$
Letting $\alpha_n\nearrow\pi/4$, we find $\lim_{n\to\infty}\fr{E_{2R_n}[T(D)]}{4R_n^2} = \infty$. This completes the proof.
\end{proof}

\section{Acknowledgements}

The authors would like to thank Xi Geng, Lesley Ward, Kaustav Das, and Binghao Wu for useful conversations.

\bibliographystyle{plainurl}
\bibliography{citation}

\end{document}